\documentclass[12pt]{amsart}
\usepackage{graphicx} 
\usepackage[english]{babel}
\usepackage{amssymb} 
\usepackage{amsmath}
\usepackage{mathtools}
\usepackage{amsthm}
\usepackage{mathrsfs}
\usepackage{enumerate}
\usepackage[    
backend=biber,
style=alphabetic,
sorting=nty
]{biblatex}
\usepackage[colorlinks,linkcolor=blue,citecolor=blue,urlcolor=blue, pdfcenterwindow, pdfstartview={XYZ null null 1.2}, pdffitwindow, pdfdisplaydoctitle=true]{hyperref}
\newtheorem{lemma}{Lemma}[section]

\newtheorem{theo}[lemma]{Theorem}

\newtheorem{rema}[lemma]{Remark}

\begin{document}

\newcommand{\N}{\mathbf N}
\newcommand{\Z}{\mathbf Z}
\newcommand{\R}{\mathbf R}
\newcommand{\Q}{\mathbf Q}
\newcommand{\C}{\mathbf C}

\title[Rapid Decay for Free Groups via Riesz--Thorin]
{A Riesz-Thorin Approach to the Rapid Decay Property for Free Groups}
\begin{abstract}
We establish $L^p$ bounds for operators associated with the quasi-regular representation of the free group on its Gromov boundary. The $p=2$ case recovers Haagerup's inequality, yielding a new interpolation-theoretic proof of the the Rapid Decay property for the free group.
\end{abstract}

\keywords{Free Group, Unitary Representation, Quasi-Regular Representation, Covariant Representation, Rapid Decay Property.}

\author{Guillaume Delord}
\email{guillaume.delord@etu.u-paris.fr}
\date{March 2026}

\maketitle

\tableofcontents

\section{Introduction}\label{Introduction}
It is well known that free groups satisfy the Rapid Decay property, also known as Haagerup's inequality, established by Haagerup in \cite{Haagerup1979}.

In this paper, we provide an alternative proof of this result by applying the Riesz–Thorin interpolation theorem to operators naturally associated with partial representations introduced later in Section~\ref{Notation}.

In particular, this permits us to carry out the computations entirely within the $L^1$ and $L^{\infty}$ settings, thereby avoiding the need to work directly in $L^2$.

The paper begins by introducing the necessary notation, then formulates an operator-theoretic condition ensuring the Rapid Decay property before presenting the proof.

It would be interesting to determine whether similar interpolation arguments may be applied to boundary representations of more general hyperbolic groups.

\subsection*{Acknowledgments}
The author would like to thank his PhD supervisor, Adrien Boyer, for his encouragement and guidance in the preparation of this work, and his Master's thesis supervisor, Christophe Pittet, for his generous support and for introducing him to the subject.

\section{Notation}\label{Notation}
Let $\Gamma$ be the free group of rank r with a fixed set of free generators $\{a_j\}_{j=1}^r$ and neutral element $e$.

Let $\mathcal{T}$ be the Cayley graph associated to right multiplication by generators or their inverses in $\Gamma$. It is a homogeneous tree of degree $2r$ with vertices $\Gamma$ and $\{x,y\}$ is an edge if and only if $y=xa_j$ or $x=ya_j$ for some $j$.

There is a natural graph metric on $\mathcal{T}$ assigning distance $1$ to adjacent vertices.

The distance from an element $g \in \Gamma$ to the identity element $e$ is called the length of $g$ and is denoted by $|g|$. $S_n$ denotes the sphere of elements of length $n$ from $e$.

Left multiplication in $\Gamma$ gives a left action on $\mathcal{T}$ which naturally makes $\Gamma$ a subgroup of $\operatorname{Aut}(\mathcal{T})$, the automorphism group of $\mathcal{T}$.
More precisely, the group $\Gamma$ acts on $\mathcal{T}$ with no inversions or rotations, but solely by translations  as it is widely discussed in the book \cite[Chap.~I, Section~3]{FigaTalamancaNebbia1991Trees}.

For convenience, we set $q:=2r-1$ which naturally appears throughout the computations, and thus $\mathcal{T}$ is a $q+1$ regular tree.
\newline

The space $\Omega$ is the boundary of the tree $\mathcal{T}$. It can be seen as the set of right-infinite products of the form  $a_{i_1}^{\epsilon_1}a_{i_2}^{\epsilon_2}\cdots a_{i_j}^{\epsilon_j} \cdots$ (where $1 \leq i_j \leq r$ and $\epsilon_j = \pm 1$ for all integer $j$) which are reduced ($i_j =i_{j+1}$ implies $\epsilon_j = \epsilon_{j+1}$). The left action of the group $\Gamma$ on $\mathcal{T}$ naturally extends to an action on $\Omega$.

The boundary $\Omega$ can also be seen as the set of half-geodesics starting at $e$. As explained in the introduction of \cite{KuhnSteger1992}, when equipped with the topology induced by the the maps from natural numbers to $\mathcal{T}$ (like geodesics), $\Omega$ is compact. A basis for this topology is given by the subsets $\Omega_{g}$ where $g$ is an element of $\Gamma$:
\begin{equation*}
    \Omega_{g}:=\{\omega \in \Omega : \omega_1\cdots \omega_{|g|} =g\}
\end{equation*}
where each $\omega_i$ belongs to the alphabet $\{a_j^{\pm 1}\}_{j=1}^r$.

Moreover, for any natural number $N$ the subsets $\{\Omega_{g}:|g|=N\}$ form a partition of $\Omega$.

The measure $\nu$ is the unique Borel regular probability measure on $\Omega$ invariant with respect to the maximal subgroup of elements of $\operatorname{Aut}(\mathcal{T})$ fixing $e$, such that
\begin{equation*}
    \nu(\Omega_{g})=\frac{1}{(q+1)q^{|g|-1}} \qquad (g \neq e ),
\end{equation*}
It is quasi-invariant under the action of $\Gamma$ on $\Omega$.

Let $\mathcal{H}$ denote the Hilbert space of square integrable complex-valued functions $L^2(\Omega,\nu)$.
Let also $\pi:\Gamma \to \mathcal{U}(\mathcal{H})$ be the unitary irreducible representation of $\Gamma$ defined by:
\begin{equation*}
    \pi(\gamma)\phi(\omega):= P(\gamma,\omega)^{\frac{1}{2}}\phi(\gamma^{-1}\omega)
\end{equation*}
where $P$ is the cocycle given by the Radon-Nikodym derivative:
\begin{equation*}
    P(\gamma,\omega) = \frac{d\gamma_*\nu}{d\nu}(\omega). 
\end{equation*}

It is well known (see, for instance, \cite[Chap.~3, Section~2]{FigaTalamancaNebbia1991}) that $P$ satisfies the following cocycle identities:
\begin{equation*}
    P(\gamma_1\gamma_2,\omega) =P(\gamma_2,\gamma_1^{-1}\omega)P(\gamma_1,\omega),\quad \gamma_1,\gamma_2 \in \Gamma, \omega \in \Omega.
\end{equation*}

 For $\gamma \in S_n$ and $i\in \{0,\cdots,n-1\}$, define 
  \begin{equation*}
      A_i(\gamma):=\{\omega \in \Omega:\omega_1\cdots\omega_i = \gamma_1\cdots \gamma_i \text{ and } \omega_{i+1}\neq \gamma_{i+1}\}=\{ \omega\in \Omega: (\omega,\gamma)_{e}=i\}.
  \end{equation*}
  where $(\cdot,\cdot)_e$ denotes the Gromov product with respect to the basepoint $e$ in the sense of \cite[Section~2.4]{Bourdon1995}.

For $\gamma \in S_n$, we also set
\begin{equation*}
    A_n(\gamma):=\Omega_\gamma.
\end{equation*}
  
Observe that $A_i(\gamma)$ is open as it can be expressed as the union of the sets $\Omega_{\gamma_1\cdots\gamma_i s}$, where $s$ ranges over generators of $\Gamma$ and their inverses, with $s\neq \gamma_{i+1}$ and $s\neq \gamma_i^{-1}$. In particular, the characteristic function $\mathbf{1}_{A_{i}(\gamma)}$ is continuous on the boundary $\Omega$.
\newline

Now, consider the commutative $C^*$-algebra $C(\Omega)$ of complex-valued continuous functions on the boundary equipped with the supremum norm and its group of *-automorphisms $\operatorname{Aut}(C(\Omega))$.

We have a group morphism $\alpha: \Gamma\to \operatorname{Aut}(C(\Omega))$ defined by:
\begin{equation*}
    \alpha(\gamma)F(\omega):=F(\gamma^{-1}\omega), \quad F \in C(\Omega), \gamma \in \Gamma, \omega \in \Omega,
\end{equation*} 
which is automatically continuous since $\Gamma$ is discrete,
and a $C^*$-representation $m:C(\Omega) \to \mathcal{B}(\mathcal{H})$ given by multiplication operator on $\mathcal{H}$, that is:
\begin{equation*}
    m(F)\phi := F\phi, \quad F \in C(\Omega), \phi \in \mathcal{H}.
\end{equation*} 
We then have:
\begin{align*}
    \pi(\gamma)m(F)\pi(\gamma)^*\phi(\omega) &= P(\gamma,\omega)^{\frac{1}{2}}\big(F\pi(\gamma^{-1})\phi\big)(\gamma^{-1}\omega)\\
    &= P(\gamma,\omega)^{\frac{1}{2}}F(\gamma^{-1}\omega)P(\gamma^{-1},\gamma^{-1}\omega)^{\frac{1}{2}}\phi(\gamma\gamma^{-1}\omega)
\end{align*}
Using the cocycle identity for $P$, we obtain the covariance relations:
\begin{equation}\label{covariance relations}
    \pi(\gamma)m(F)\pi(\gamma)^* = m(\alpha(\gamma)F) \quad F \in C(\Omega), \gamma \in \Gamma
\end{equation}
In other words, the triple $(m,\pi,\mathcal{H})$ constitutes a covariant representation of the $C^*$-dynamical system $(C(\Omega),\Gamma,\alpha)$ as described in \cite[Section~7.4.1 and Section~7.4.8]{Pedersen2018Cstar}.

For an element $\gamma$ of length $n$ and $i\in \{0,\cdots,n\}$, the corresponding partial representations are defined by:
\begin{equation*}
\pi_{i}(\gamma):=m(\mathbf{1}_{A_{i}(\gamma)})\pi(\gamma).
\end{equation*}
Replacing $\gamma$ by $\gamma^{-1}$ and taking $F=\mathbf{1}_{A_{i}(\gamma)}$ in (\ref{covariance relations}), we obtain:
\begin{equation*}
    \pi_{i}(\gamma)=\pi(\gamma)m(\alpha(\gamma^{-1})\mathbf{1}_{A_{i}(\gamma)})=\pi(\gamma)m(\mathbf{1}_{A_{n-i}(\gamma^{-1})})
\end{equation*}
where the last equality is a consequence of the following lemma.
\begin{lemma}
    For all $i$ in $\{0,\cdots,n\}$ and $\gamma$ in $S_n$,
    \begin{equation*}
        A_{n-i}(\gamma^{-1})= \gamma^{-1}A_i(\gamma)
    \end{equation*}
\end{lemma}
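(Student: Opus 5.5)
Write $\gamma=\gamma_1\cdots\gamma_n$ in reduced form, so that $\gamma^{-1}=\gamma_n^{-1}\cdots\gamma_1^{-1}$ is reduced with letters $(\gamma^{-1})_k=\gamma_{n+1-k}^{-1}$. The plan is to compute explicitly the reduced infinite word of $\gamma^{-1}\omega$ for $\omega\in A_i(\gamma)$. We then check that it lies in $A_{n-i}(\gamma^{-1})$, and obtain the reverse inclusion by the symmetric argument with $\gamma$ replaced by $\gamma^{-1}$.

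First take $0\le i\le n-1$ and $\omega\in A_i(\gamma)$. Then $\omega=\gamma_1\cdots\gamma_i\,\omega_{i+1}\omega_{i+2}\cdots$ with $\omega_{i+1}\neq\gamma_{i+1}$. Left multiplication by $\gamma^{-1}$ cancels the common prefix $\gamma_1\cdots\gamma_i$, giving
\begin{equation*}
\gamma^{-1}\omega=\gamma_n^{-1}\cdots\gamma_{i+1}^{-1}\,\omega_{i+1}\omega_{i+2}\cdots .
\end{equation*}
This word is reduced: the junction letters are $\gamma_{i+1}^{-1}$ and $\omega_{i+1}$, and cancellation would require $\omega_{i+1}=\gamma_{i+1}$, which is excluded. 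So $\gamma^{-1}\omega$ has prefix of length $n-i$ equal to $\gamma_n^{-1}\cdots\gamma_{i+1}^{-1}$, which is the prefix of $\gamma^{-1}$ of length $n-i$. Its next letter is $\omega_{i+1}$. For $n-i\le n-1$, i.e. $i\ge1$, that letter must differ from $(\gamma^{-1})_{n-i+1}=\gamma_i^{-1}$, and it does, because $\omega$ is reduced and $\omega_i=\gamma_i$. Hence $\gamma^{-1}\omega\in A_{n-i}(\gamma^{-1})$. For $i=0$ the target is $A_n(\gamma^{-1})=\Omega_{\gamma^{-1}}$, and only the prefix condition is needed.

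The case $i=n$ is direct: if $\omega\in\Omega_\gamma$ then $\gamma^{-1}\omega=\omega_{n+1}\omega_{n+2}\cdots$ with $\omega_{n+1}\ne\gamma_n^{-1}=(\gamma^{-1})_1$. This means $(\gamma^{-1}\omega,\gamma^{-1})_e=0$, i.e. $\gamma^{-1}\omega\in A_0(\gamma^{-1})$.

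Together these give $\gamma^{-1}A_i(\gamma)\subseteq A_{n-i}(\gamma^{-1})$ for every $\gamma\in S_n$ and every $i$. Applying this inclusion to $\gamma^{-1}\in S_n$ with index $n-i$ gives $\gamma A_{n-i}(\gamma^{-1})\subseteq A_i(\gamma)$, and multiplying by $\gamma^{-1}$ yields the reverse inclusion. Equivalently, the sets $A_i(\gamma)$ for $i=0,\dots,n$ partition $\Omega$, as do the $A_{n-i}(\gamma^{-1})$, so the inclusions force equality.

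The main obstacle is the reducedness bookkeeping at the junctions and the endpoint cases $i=0$ and $i=n$, where the defining conditions change form. In particular, at $i=n$ the condition $\omega_{n+1}\neq\gamma_{n+1}$ is meaningless and is replaced by membership in $\Omega_\gamma$. The argument must track that the "next letter differs" condition translates into the reducedness of $\omega$ at position $i$.
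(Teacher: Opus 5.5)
Your proof is correct and follows essentially the paper's route: you compute the reduced word of the translate explicitly to get one inclusion, then obtain the other by substituting $(\gamma^{-1},n-i)$ for $(\gamma,i)$. The only differences are that you prove $\gamma^{-1}A_i(\gamma)\subseteq A_{n-i}(\gamma^{-1})$ first, whereas the paper starts from $\omega\in A_{n-i}(\gamma^{-1})$ and computes $\gamma\omega$, and that you make explicit the reducedness checks and the endpoint cases $i=0$, $i=n$, which the paper leaves implicit.
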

\begin{proof}
    Let $\omega \in A_{n-i}(\gamma^{-1})$. This means
    \begin{equation*}
        \omega_1 \cdots \omega_{n-i}= \gamma_n^{-1}\cdots\gamma_{i+1}^{-1} \text{ and } \omega_{n-i+1}\neq \gamma_{i}^{-1}
    \end{equation*}
    In particular
    \begin{equation*}
        \omega = \gamma^{-1}(\gamma\omega)=\gamma^{-1}(\gamma_1\cdots\gamma_i\omega_{n-i+1}\cdots) \in \gamma^{-1}A_i(\gamma)
    \end{equation*}
    and we have $A_{n-i}(\gamma^{-1})\subseteq \gamma^{-1}A_i(\gamma)$.

    Applying the same argument with $i$ replaced by $n-i\in \{0,\cdots,n\}$ and $\gamma$ by $\gamma^{-1}\in S_n$ yields $A_{i}(\gamma)\subseteq \gamma A_{n-i}(\gamma^{-1})$ which is equivalent to the reverse inclusion.
\end{proof}

In this form, the adjoint of $\pi_i(\gamma)$ can be computed explicitly:
\begin{equation}\label{adjoint}
    \pi_i(\gamma)^*=m(\mathbf{1}_{A_{n-i}(\gamma^{-1})})^*\pi(\gamma)^*
    =m(\overline{\mathbf{1}_{A_{n-i}(\gamma^{-1})}})\pi(\gamma^{-1})
    =\pi_{n-i}(\gamma^{-1}).
\end{equation}
To simplify notation, we omit $m$ and write:
\begin{equation*}
    \pi_{i}(\gamma)=\mathbf{1}_{A_{i}(\gamma)}\pi(\gamma).
\end{equation*}
\section{The Rapid Decay property for the free group}

In this section, we establish the Rapid Decay Property for the free group, formulated in terms of operator-norm bounds.

Given two measure spaces $X$ and $Y$, which will always be clear from the context, we write $\|T\|_{s\to s'}$ for the norm of a bounded linear operator $T: L^s(X) \to L^{s'}(Y)$, with $1\leq s,s'\leq +\infty$. Namely,
\begin{equation*}
    \|T\|_{s\to s'}:=\underset{\|v\|_{L^s(X)}\leq 1}{\sup}\|Tv\|_{L^{s'}(Y)}
\end{equation*}

Using the above notation, the property under consideration may be stated as:

When $f$ is an element of $\mathbb{C}\Gamma$ with support in $S_n$, the following estimate holds:
\begin{equation}\tag{RD}\label{RD}
    \|\pi(f)\|_{2\to 2}\leq (n+1)\|f\|_{l^2(\Gamma)}.
\end{equation}

\subsection{An Operator Condition Ensuring Rapid Decay}\label{Section sufficient RD}
We first decompose $\pi(f)=\overset{n}{\underset{i=0}{\sum}}\pi_i(f)$,
where each $\pi_i(f)$ is defined by $\underset{\gamma \in S_n}{\sum}f(\gamma)\pi_i(\gamma)$, and observe that it is enough to prove that each $\pi_i(f)$ is bounded by $\|f\|_{l^2(\Gamma)}$.

A straightforward application of the Cauchy-Schwarz inequality yields:
\begin{equation*}
    |\langle\pi_i(f)\phi,\psi\rangle|
    \leq
    \underset{\gamma \in S_n}{\sum}|f(\gamma)|.|\langle\pi_i(\gamma)\phi,\psi\rangle|
    \leq
    \|f\|_{l^2(\Gamma)}\big(\underset{\gamma \in S_n}{\sum}|\langle\pi_i(\gamma)\phi,\psi\rangle|^2 \big)^{\frac{1}{2}}.
\end{equation*}
The summand on the right-hand side may be rewritten as:
\begin{align*}
    &|\langle\pi_i(\gamma)\phi,\psi\rangle|^2
    =\langle\pi_i(\gamma)\phi,\psi\rangle \overline{\langle\pi_{i}(\gamma)\phi,\psi\rangle}
    =\langle\pi_i(\gamma)\phi,\psi\rangle \langle\psi,\pi_{i}(\gamma)\phi\rangle\\
    &=\langle\pi_i(\gamma)\phi,\psi\rangle \langle\pi_{n-i}(\gamma^{-1})\psi,\phi\rangle
    =\langle\pi_i(\gamma)\otimes\pi_{n-i}(\gamma^{-1})\phi\otimes\psi,\psi\otimes\phi\rangle
\end{align*}
 where the third equality holds because $\pi_i(\gamma)^*\overset{(\ref{adjoint})}{=}\pi_{n-i}(\gamma^{-1})$.
 
Therefore, it is natural to consider the bounded linear operator acting on the algebraic tensor product $L^{2}(\Omega,\nu)\otimes L^{2}(\Omega,\nu)$:
\begin{equation*}
    T_{n,i}:=\underset{\gamma \in S_n}{\sum}\pi_i(\gamma)\otimes\pi_{n-i}(\gamma^{-1})
\end{equation*}
In this setting, $\langle T_{n,i}\phi\otimes\psi,\psi\otimes\phi\rangle = \underset{\gamma \in S_n}{\sum}|\langle\pi_i(\gamma)\phi,\psi\rangle|^2$ is a positive real number and, since $\pi(f)=\overset{n}{\underset{i=0}{\sum}}\pi_i(f)$, it suffices to prove the following condition to obtain (\ref{RD}):
\begin{equation*}
    \underset{\|\phi\|_2,\|\psi\|_2\leq 1}{\sup}\langle T_{n,i}\phi\otimes\psi,\psi\otimes\phi\rangle \leq 1 \quad (0\leq i \leq n)
\end{equation*}

\subsection{A proof based on the Riesz–Thorin Theorem}
Since $P(\gamma,\cdot)$ takes only finitely many values when $\gamma\in S_n$, the formula:
\begin{equation*}
    \pi(\gamma)\otimes \pi(\gamma^{-1})F(\omega,\omega') = P(\gamma,\omega)^{\frac{1}{2}}P(\gamma^{-1},\omega')^{\frac{1}{2}}F(\gamma^{-1}\omega,\gamma\omega')
\end{equation*}
defines a bounded linear operator on 
$L^s(\Omega \times \Omega, \nu \otimes \nu)$ for all $s \in [1,\infty]$, 
which is unitary when $s = 2$.

It follows that $T_{n,i}$ can be regarded as a bounded linear operator on $L^s(\Omega \times \Omega, \nu \otimes \nu)$ which naturally extends the previous definition on pure tensors, for $s = 2$.
\newline

The Riesz-Thorin theorem will play a central role in our argument. The proof can be found in \cite[Chapter~IX, Section~4]{ReedSimon1975II} or \cite[Thm~21]{tao245c}.
\begin{theo} (Riesz-Thorin from \cite[Thm~21]{tao245c})
    Let $X$ and $Y$ be two measure spaces.
    
    Let $1\leq p_0,p_1,q_0,q_1\leq \infty$ and $T:L^{p_0}(X)+L^{p_1}(X) \to L^{q_0}(Y)+L^{q_1}(Y)$ such that $\|T\|_{p_0\to q_0}\leq B_0$ and $\|T\|_{p_1\to q_1} \leq B_1$. Then we have:
    \begin{equation*}
    \|T\|_{p_{\theta}\to q_{\theta}}\leq B_0^{1-\theta}B_1^{\theta}        
    \end{equation*}
    for all $0<\theta<1$ where $\frac{1}{p_{\theta}}=\frac{1-\theta}{p_0}+ \frac{\theta}{p_1}$ and $\frac{1}{q_{\theta}}=\frac{1-\theta}{q_0}+ \frac{\theta}{q_1}$.
\end{theo}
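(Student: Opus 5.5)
We would prove this by the classical complex-interpolation argument based on Hadamard's three lines lemma. The key idea is to realize the pairing $\langle Tf,g\rangle$ as the value at $z=\theta$ of an analytic function on the strip $0\le \operatorname{Re} z\le 1$. This function is controlled by $B_0$ on the line $\operatorname{Re} z=0$ and by $B_1$ on the line $\operatorname{Re} z=1$. Throughout we work with complex scalars, which is the setting of the paper since $\mathcal{H}=L^2(\Omega,\nu)$ is complex.

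\textbf{Step 1: reduction to simple functions and duality.} Fix $0<\theta<1$ and let $q_\theta'$ be the conjugate exponent of $q_\theta$. By the duality $\|h\|_{L^{q_\theta}(Y)}=\sup|\int_Y hg\,d\mu_Y|$, it suffices to show
\begin{equation*}
\Big|\int_Y (Tf)\,g\Big|\le B_0^{1-\theta}B_1^{\theta}\,\|f\|_{p_\theta}\|g\|_{q_\theta'} .
\end{equation*}
Here the supremum is taken over $\|g\|_{q'_\theta}\le1$, with $g$ ranging over simple functions with finite-measure support. This is legitimate for $q_\theta<\infty$; in the case $q_\theta=\infty$, $q'_\theta=1$, we use $L^1$-simple functions, with the usual $\sigma$-finiteness caveat. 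It also suffices to prove the bound for $f$ simple with finite-measure support, and then extend by density when $p_\theta<\infty$. The case $p_\theta=\infty$ forces $p_0=p_1=\infty$, and it needs a separate remark. In that case we use the standard variant of the argument in which $f$ is not deformed, only $g$ is. Alternatively, we note that in our application $T_{n,i}$ is a finite sum of weighted composition operators, so density issues are harmless.

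\textbf{Step 2: the analytic family.} Normalize $\|f\|_{p_\theta}=\|g\|_{q'_\theta}=1$. Write $f=\sum_k a_k e^{i\alpha_k}\mathbf{1}_{E_k}$ and $g=\sum_l b_l e^{i\beta_l}\mathbf{1}_{F_l}$ with $a_k,b_l>0$. Define
\begin{equation*}
f_z=\sum_k a_k^{\,p_\theta\left(\frac{1-z}{p_0}+\frac{z}{p_1}\right)}e^{i\alpha_k}\mathbf{1}_{E_k},\qquad
g_z=\sum_l b_l^{\,q'_\theta\left(\frac{1-z}{q'_0}+\frac{z}{q'_1}\right)}e^{i\beta_l}\mathbf{1}_{F_l},
\end{equation*}
with the obvious modification when an exponent is infinite. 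Then set $\Phi(z)=\int_Y (Tf_z)\,g_z$. By linearity of $T$, $\Phi$ is a finite linear combination of terms of the form $c\,e^{\lambda z}$. Hence $\Phi$ is entire and bounded on the closed strip, and $\Phi(\theta)=\int_Y (Tf)g$.

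\textbf{Step 3: boundary estimates and conclusion.} On the line $\operatorname{Re} z=0$ one computes $|f_z|=|f|^{p_\theta/p_0}$ and $|g_z|=|g|^{q'_\theta/q'_0}$. Hence $\|f_z\|_{p_0}=1$ and $\|g_z\|_{q'_0}=1$. Hölder's inequality and the hypothesis $\|T\|_{p_0\to q_0}\le B_0$ then give $|\Phi(z)|\le B_0$. Symmetrically, $|\Phi(z)|\le B_1$ on the line $\operatorname{Re} z=1$. The three lines lemma yields $|\Phi(\theta)|\le B_0^{1-\theta}B_1^\theta$, which is the claim.

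The main obstacle is bookkeeping rather than analysis. First, the degenerate exponents ($p_j$ or $q'_j$ infinite, and in particular $p_\theta=\infty$) require care in defining $f_z,g_z$ and in the density step. Second, one must verify that $\Phi$ is bounded on the strip, so that the three lines lemma, which we would prove first via Phragmén–Lindelöf applied to $\Phi(z)B_0^{z-1}B_1^{-z}e^{\varepsilon z^2}$, applies. I would handle the infinite-exponent cases first by treating $1/\infty=0$ consistently in the exponents above. When both $p_0$ and $p_1$ are infinite, I would keep $f_z=f$.
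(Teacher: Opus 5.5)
Your proposal is correct. The paper gives no proof of its own; it simply cites Tao's notes and Reed--Simon. Your argument is the standard complex-interpolation proof found there: reduction by duality to simple functions, the analytic families $f_z$ and $g_z$, boundary bounds via H\"older, and the three lines lemma. Your treatment of the degenerate exponents is adequate. In the paper's application ($p_0=q_0=\infty$, $p_1=q_1=1$, $\theta=\tfrac12$ on the probability space $\Omega\times\Omega$), neither the $p_\theta=\infty$ case nor the $\sigma$-finiteness caveat arises.
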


The strategy is to show that $T_{n,i}$ satisfies the hypotheses of the theorem for $p_0=q_0=\infty$, $p_1=q_1=1$ and $B_0=B_1=1$.

The following result concerns a geometric quantity that arises naturally in the computation of $\|T_{n,i}\|_{\infty\to\infty}$ and $\|T_{n,i}\|_{1\to 1}$:

\begin{lemma}\label{geometric lemma}
Let $i,n$ be natural numbers such that $i\leq n$. Then we have:
    \begin{equation*}
        \underset{\omega,\omega'\in\Omega}{\sup}\underset{\gamma \in S_n}{\sum}\mathbf{1}_{A_i(\gamma)}(\omega)\mathbf{1}_{A_{n-i}(\gamma^{-1})}(\omega')=1
    \end{equation*}
\end{lemma}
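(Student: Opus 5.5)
We must show two things: for fixed $\omega,\omega'$ there is at most one $\gamma\in S_n$ with $\omega\in A_i(\gamma)$ and $\omega'\in A_{n-i}(\gamma^{-1})$, and that the supremum is attained, i.e. some pair $(\omega,\omega')$ admits such a $\gamma$. The sum is then a sum of indicator values, each $0$ or $1$, with at most one nonzero term, so its supremum is exactly $1$.

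For uniqueness, we read off $\gamma$ letter by letter. The condition $\omega\in A_i(\gamma)$ forces $\gamma_1\cdots\gamma_i=\omega_1\cdots\omega_i$, so the first $i$ letters of $\gamma$ are determined by $\omega$. The condition $\omega'\in A_{n-i}(\gamma^{-1})$ forces $\omega'_1\cdots\omega'_{n-i}=(\gamma^{-1})_1\cdots(\gamma^{-1})_{n-i}=\gamma_n^{-1}\cdots\gamma_{i+1}^{-1}$, so the last $n-i$ letters of $\gamma$ are determined by $\omega'$. Hence $\gamma=\omega_1\cdots\omega_i\,(\omega'_{n-i})^{-1}\cdots(\omega'_1)^{-1}$ is uniquely determined by $(\omega,\omega')$, whenever such a $\gamma$ exists. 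The edge cases $i=0$ and $i=n$ are covered by the same reading: one of the prefixes is empty, and the condition $A_n(\gamma)=\Omega_\gamma$ is just the prefix condition. The inequality conditions $\omega_{i+1}\neq\gamma_{i+1}$, etc., are not needed for uniqueness; they only restrict existence. So the sum is at most $1$.

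For attainment, we pick any $\gamma\in S_n$ (possible for $n\ge 0$, with $\gamma=e$ when $n=0$) and exhibit $\omega\in A_i(\gamma)$ and $\omega'\in A_{n-i}(\gamma^{-1})$. Each of these sets is nonempty, as observed earlier: $A_i(\gamma)$ is a union of cylinders $\Omega_{\gamma_1\cdots\gamma_i s}$ with $s\neq\gamma_{i+1},\gamma_i^{-1}$. Such an $s$ exists since there are $2r\geq 2$ letters. If $r\ge 2$ we have at least $4$ letters, so two exclusions leave a choice. If $r=1$, the only candidate is $s=\gamma_i$, which equals $\gamma_{i+1}$ exactly when $\gamma$ is a power of a single generator, which it always is. For $0<i<n$ and $r=1$ the set $A_i(\gamma)$ is then empty, and the product structure must be checked directly.

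The main obstacle is this rank-one degenerate case. We would assume $r\ge 2$ (the free group of interest) or note that the statement only requires the upper bound $\le 1$ in the subsequent Riesz--Thorin application. For $r\ge2$, each cylinder is nonempty because every reduced finite word extends to an infinite reduced word. The two choices of $\omega$ and $\omega'$ are independent, so the pair exists and the supremum equals $1$.
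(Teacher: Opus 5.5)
Your argument is correct and follows the paper's proof: reading off $\gamma=\omega_1\cdots\omega_i(\omega'_{n-i})^{-1}\cdots(\omega'_1)^{-1}$ gives the bound $\leq 1$, and your nonemptiness argument for $A_i(\gamma)$ and $A_{n-i}(\gamma^{-1})$ supplies the attainment that the paper leaves implicit in its no-cancellation conditions at the junction. Your rank-one remark can be made sharper: for $r=1$ and $0<i<n$ every $A_i(\gamma)$ with $\gamma\in S_n$ is empty, so the sum vanishes identically and the supremum is $0$, meaning the equality genuinely needs $r\geq 2$ (tacit in the paper, e.g.\ in its claim that $\pi$ is irreducible), whereas the bound $\leq 1$, which is all the Riesz--Thorin step uses, holds for every $r$.
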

\begin{proof}[Proof of Lemma 3.2]
Let $\omega, \omega'$ be two elements of $\Omega$, and assume the existence of $\gamma \in S_n$ satisfying $(\omega, \omega')\in A_i(\gamma)\times A_{n-i}(\gamma^{-1})$. Equivalently, we have:
    $$
    \begin{cases}
        \gamma_1\cdots\gamma_i  = \omega_1\cdots \omega_i,  & \gamma_{i+1} \neq \omega_{i+1} \\
         \gamma^{-1}_n\cdots\gamma^{-1}_{i+1}  = \omega'_1\cdots \omega'_{n-i},  & \gamma^{-1}_{i} \neq \omega'_{n-i+1}
    \end{cases}
    $$
    Consequently, $\gamma=\omega_1\cdots\omega_i(\omega'_{n-i})^{-1}\cdots(\omega'_1)^{-1}$ is uniquely determined, from which we deduce that:
    \begin{equation*}
        \underset{\gamma \in S_n}{\sum}\mathbf{1}_{A_i(\gamma)}(\omega)\mathbf{1}_{A_{n-i}(\gamma^{-1})}(\omega') \leq 1.
    \end{equation*}
    More precisely, we distinguish cases according to the pair $(\omega, \omega')$.
    The formula $\omega_1\cdots\omega_i(\omega'_{n-i})^{-1}\cdots(\omega'_1)^{-1}$ gives a word of length $n=|\gamma|$ precisely when no cancellation occurs at the concatenation point and we obtain:
    $$
    \underset{\gamma \in S_n}{\sum}\mathbf{1}_{A_i(\gamma)}(\omega)\mathbf{1}_{A_{n-i}(\gamma^{-1})}(\omega')
    =\begin{cases}
        1 & (\text{ if } \omega_{i+1}^{-1}\neq \omega'_{n-i}\neq \omega_i \neq (\omega'_{n-i+1})^{-1})\\
        0 & (\text{otherwise)}
    \end{cases}$$
    Therefore we conclude that $\underset{\omega,\omega'\in\Omega}{\sup}\underset{\gamma \in S_n}{\sum}\mathbf{1}_{A_i(\gamma)}(\omega)\mathbf{1}_{A_{n-i}(\gamma^{-1})}(\omega')$ is exactly equal to $1$.    
\end{proof}
\begin{rema}
    This lemma is closely related to \cite[Prop~6.1]{Boyer2021Spherical}. Indeed, its proof relies on counting the elements of the sets:
    \begin{equation*}
        \{\gamma \in S_n : (\omega,\omega') \in A_i(\gamma)\times A_{n-i}(\gamma^{-1})\}
    \end{equation*}
    More precisely, one has:
    \begin{equation*}
        \underset{\gamma \in S_n}{\sum}\mathbf{1}_{A_{n-i}(\gamma^{-1})}(\omega)\mathbf{1}_{A_i(\gamma)}(\omega') = \#\{\gamma \in S_n : (\omega,\omega') \in A_i(\gamma)\times A_{n-i}(\gamma^{-1})\}
    \end{equation*} 
\end{rema}
We now deduce the following lemma.
\begin{lemma}
For all natural numbers $i$ and $n$ such that $i\leq n$, we have:
\begin{equation*}    \|T_{n,i}\|_{\infty \to\infty}\leq 1 \text{ and } \|T_{n,i}\|_{1\to 1}\leq 1
\end{equation*}
\end{lemma}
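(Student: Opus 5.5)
The plan is to write $T_{n,i}$ explicitly as a weighted composition operator on $\Omega\times\Omega$. We then show that the weight is identically equal to the indicator appearing in Lemma~\ref{geometric lemma}, which gives the $\infty\to\infty$ bound directly. The $1\to 1$ bound then follows by duality. Expanding the definitions, for $F$ on $\Omega\times\Omega$,
\begin{equation*}
T_{n,i}F(\omega,\omega')=\sum_{\gamma\in S_n}\mathbf{1}_{A_i(\gamma)}(\omega)\mathbf{1}_{A_{n-i}(\gamma^{-1})}(\omega')\,P(\gamma,\omega)^{\frac12}P(\gamma^{-1},\omega')^{\frac12}\,F(\gamma^{-1}\omega,\gamma\omega').
\end{equation*}

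\textbf{Step 1: the weight equals one on the support.} The key input is the explicit form of the Radon--Nikodym cocycle (see \cite[Chap.~3, Section~2]{FigaTalamancaNebbia1991}):
\begin{equation*}
P(\gamma,\omega)=q^{2(\omega,\gamma)_e-|\gamma|}.
\end{equation*}
If one prefers a self-contained derivation, it suffices to compare $\nu(\gamma^{-1}\Omega_g)$ with $\nu(\Omega_g)$ for long $g$ with prescribed Gromov product with $\gamma$, using the formula $\nu(\Omega_g)=\frac{1}{(q+1)q^{|g|-1}}$. Granting the formula:
\begin{itemize}
\item on $A_i(\gamma)$ we have $(\omega,\gamma)_e=i$, hence $P(\gamma,\omega)=q^{2i-n}$;
\item on $A_{n-i}(\gamma^{-1})$ we have $(\omega',\gamma^{-1})_e=n-i$, hence $P(\gamma^{-1},\omega')=q^{n-2i}$.
\end{itemize}
Therefore, on the support of the indicators,
\begin{equation*}
P(\gamma,\omega)^{\frac12}P(\gamma^{-1},\omega')^{\frac12}=1.
\end{equation*}
This exact cancellation is what makes both endpoint bounds sharp with constant $1$. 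I expect it to be the main point requiring care, namely checking the exponent conventions and the boundary cases $i=0$ and $i=n$.

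\textbf{Step 2: the $\infty\to\infty$ bound.} By Step 1, for every $(\omega,\omega')$,
\begin{equation*}
|T_{n,i}F(\omega,\omega')|\le\|F\|_\infty\sum_{\gamma\in S_n}\mathbf{1}_{A_i(\gamma)}(\omega)\mathbf{1}_{A_{n-i}(\gamma^{-1})}(\omega').
\end{equation*}
By Lemma~\ref{geometric lemma} the sum is at most $1$, so $\|T_{n,i}\|_{\infty\to\infty}\le 1$.

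\textbf{Step 3: the $1\to1$ bound by duality.} Each operator $\pi_i(\gamma)\otimes\pi_{n-i}(\gamma^{-1})$ has real nonnegative weights. Its adjoint with respect to the pairing $\int_{\Omega\times\Omega}F\,\overline{G}\,d(\nu\otimes\nu)$ is therefore given by the same formal expression as the $L^2$ adjoint. Using (\ref{adjoint}), this adjoint is
\begin{equation*}
\pi_i(\gamma)^*\otimes\pi_{n-i}(\gamma^{-1})^*=\pi_{n-i}(\gamma^{-1})\otimes\pi_i(\gamma).
\end{equation*}
Summing over $\gamma\in S_n$ and reindexing by $\gamma\mapsto\gamma^{-1}$, which is a bijection of $S_n$, the adjoint of $T_{n,i}$ is $T_{n,n-i}$.

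To identify this adjoint on all of $L^1$ and $L^\infty$, not only on $L^2$, I would verify the duality identity
\begin{equation*}
\int (T_{n,i}F)\,\overline{G}=\int F\,\overline{T_{n,n-i}G}
\end{equation*}
directly on simple functions. This uses the change of variables $(\omega,\omega')\mapsto(\gamma^{-1}\omega,\gamma\omega')$ with Jacobian $P(\gamma^{-1},\cdot)\otimes P(\gamma,\cdot)$ from the definition of $P$. The identity then extends by density and boundedness.

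Consequently,
\begin{equation*}
\|T_{n,i}\|_{1\to1}=\sup_{\|G\|_\infty\le1}\|T_{n,n-i}G\|_\infty=\|T_{n,n-i}\|_{\infty\to\infty}\le 1,
\end{equation*}
where the last inequality is Step 2 applied with $n-i$ in place of $i$.

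Alternatively, one can bound $\|T_{n,i}F\|_1$ directly by the same change of variables. This again reduces to the pointwise estimate of Lemma~\ref{geometric lemma}, now applied at the transformed points $(\gamma^{-1}\omega,\gamma\omega')$, which lie in $A_{n-i}(\gamma^{-1})\times A_i(\gamma)$ by the first lemma of Section~\ref{Notation}.
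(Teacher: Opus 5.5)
Your proof is correct and follows the paper's argument: the same cocycle formula $P(\gamma,\omega)=q^{2i-n}$ on $A_i(\gamma)$ makes the weights cancel, and Lemma~\ref{geometric lemma} then gives the $\infty\to\infty$ bound exactly as in the paper. The only variation is that you get the $1\to1$ bound by duality through $T_{n,i}^*=T_{n,n-i}$. Verifying that identity on $L^1\times L^\infty$ uses the same change of variables $(\omega,\omega')\mapsto(\gamma^{-1}\omega,\gamma\omega')$ with the cocycle identity that the paper performs directly, and which you also sketch as your alternative, so the two routes are essentially identical.
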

\begin{proof}
    It is well known that, if $\gamma$ is in $S_n$ and $\omega$ is in $A_i(\gamma)$, then $P(\gamma,\omega) = q^{2i - n}$; see, for instance, \cite[Lemma~2.10]{BoyerLobos2017}.
    Then, for all $(\omega, \omega') \in A_i(\gamma) \times A_{n-i}(\gamma^{-1})$, we have $P(\gamma, \omega) P(\gamma^{-1}, \omega') = q^{2i-n}q^{2(n-i)-n}=1$ and we obtain:
    \begin{equation*}
        T_{n,i}F(\omega,\omega')=\underset{\gamma \in S_n}{\sum}\mathbf{1}_{A_i(\gamma)}(\omega)\mathbf{1}_{A_{n-i}(\gamma^{-1})}(\omega')F(\gamma^{-1}\omega,\gamma\omega').
    \end{equation*}
    This yields $\|T_{n,i}\|_{\infty\to\infty}\leq B_0:=\underset{\omega,\omega'\in\Omega}{\sup}\underset{\gamma \in S_n}{\sum}\mathbf{1}_{A_i(\gamma)}(\omega)\mathbf{1}_{A_{n-i}(\gamma^{-1})}(\omega')\overset{ (\ref{geometric lemma})}{=}1$.
    
   Furthermore, using Fubini's theorem twice, performing the changes of variables $\gamma^{-1}\omega \mapsto \omega$ and $\gamma\omega' \mapsto \omega'$, and exploiting the cocycle property of $P$, we have:
   \begin{equation*}
       \underset{\Omega\times\Omega}{\int}|T_{n,i}F|d\nu\otimes\nu
       = \underset{\Omega\times\Omega}{\int}\big(\underset{\gamma \in S_n}{\sum}\mathbf{1}_{A_{n-i}(\gamma^{-1})}(\omega)\mathbf{1}_{A_i(\gamma)}(\omega')\big)|F(\omega,\omega')|d\nu\otimes\nu(\omega,\omega').
   \end{equation*}
    which leads to $\|T_{n,i}\|_{1\to 1}\leq B_1:=\underset{\omega,\omega'\in\Omega}{\sup}\underset{\gamma \in S_n}{\sum}\mathbf{1}_{A_{n-i}(\gamma^{-1})}(\omega)\mathbf{1}_{A_i(\gamma)}(\omega')=B_0=1$.
\end{proof}
The proof is completed by combining the previous lemma with the Riesz--Thorin theorem for $\theta = \frac{1}{2}$, which yields the sufficient condition for (\ref{RD}) as seen in section \ref{Section sufficient RD}:
\begin{equation*}
    \underset{\|\phi\|_2,\|\psi\|_2\leq 1}{\sup}\langle T_{n,i}\phi\otimes\psi,\psi\otimes\phi\rangle\leq \|T_{n,i}\|_{2\to 2}\leq B_0^{\frac{1}{2}}B_1^{\frac{1}{2}} = 1.
\end{equation*}

\begin{rema}
   The argument in fact yields uniform $L^p$-bounds for the operators $T_{n,i}$.
More precisely, combining the previous lemma with the Riesz–Thorin interpolation theorem, we obtain that for every $p\in[1,\infty]$,
    \begin{equation*}
        \|T_{n,i}\|_{p\to p}\leq B_0^{1-\frac{1}{p}}B_1^{\frac{1}{p}} = 1
    \end{equation*}
\end{rema}

\printbibliography

\end{document}